\newtheorem{thm}{Theorem}
\newtheorem{lemma}[thm]{Lemma}
\theoremstyle{remark}
\begin{document}
\author[A. Thomack]{Andrew Thomack}
\title[Random harmonic polynomials]{On the zeros of random harmonic polynomials: the naive model}

\begin{abstract}
A complex harmonic polynomial is the sum of a complex polynomial and a conjugated complex polynomial, of degrees $n$ and $m$ respectively.  Li and Wei present a formula for the expected number of zeros of a random harmonic polynomial in $\mathbb{C}$.  In this paper we prove that if $m$ is a fixed number, this expectation is asymptotically $n$ as $n\rightarrow\infty$, and if $m=n$ we find a lower and upper bound of order $n\log n$ for this expectation for sufficiently large $n$.
\end{abstract}

\maketitle

\section{Introduction}

A \emph{harmonic polynomial} is a function of the form $h(z)=p(z)+\overline{q(z)}$ where $p$ and $q$ are analytic polynomials so that $h$ is harmonic over $\mathbb{C}$ ~\cite{Small}.  We will refer to $h_{n,m}=p_n+\overline{q_m}$ as a harmonic polynomial of degree $n$ and $m$ where $p_n$ has degree $n$ and $q_m$ has degree $m$, and will use the convention $n\ge m$.  Since $h_{n,m}$ is not analytic, the Fundamental Theorem of Algebra does not tell us the number of zeros of $h_{n,m}$.  As with the number of real roots of real polynomials, there is a wide range of possibilities for the number of zeros of $h_{n,m}$, which we will denote as $\mathcal{N}_{n,m}$ or just $\mathcal{N}$.  Even if the number cannot be completely determined by $n$, some restrictions can yet be made on $\mathcal{N}_{n,m}$.  Wilmhurst proved $\mathcal{N}_{n,m}$ is bounded above by $\max\{n^2,m^2\}$ ~\cite{Wilmshurst} if certain degenerate cases of $m=n$ are excluded (occuring with probability $0$), while the lower bound is found to be $\max\{n,m\}$ by the generalized argument principle.  Both bounds are sharp, though for $m=1$, the upper bound has been improved (see ~\cite{Khavinson}) to $3n-2$, and it has been conjectured (see ~\cite{Wilmshurst}, ~\cite{Lee}, and ~\cite{Saez}) that there is a general improvement that is linear in $n$ for each fixed $m$.

Because of the range of values $\mathcal{N}_{n,m}$ could take, one might ask for the average value of $\mathcal{N}_{n,m}$ given a random harmonic polynomial, $h_{n,m}$.  A useful formula for determining the exact number of zeros of random real polynomials was provided by Kac ~\cite{Kac} while Dunnage provided a similar formula for trigonometric polynomials ~\cite{Dunnage}.  Shepp and Vanderbei go on to extend Kac's result to determining the number of zeros of complex polynomials for a given complex domain ~\cite{Vanderbei}.  

The Kac formula and its generalizations have been used by Li and Wei to give an explicit formula for $\mathbb{E}\mathcal{N}_{n,m}$ for a fixed $n$ and $m$ when the coefficients are independent Gaussian random variables.  In the case when the variances of the coefficients are scaled by a binomial factor, they also determined the asymptotics of their formula as $n\rightarrow\infty$ to be
\[\mathbb{E}\mathcal{N}_{n,m}\sim\left\{\begin{array}{lcl}\frac{\pi}{4}n^{3/2}&,& m=n\\n&,&m=\alpha n+o(n),\ \alpha\in[0,1)\end{array}\right.\]
An altered definition for the random coefficients, referred to as the ``truncated model'', was studied by Lerario and Lundberg who found different asymptotics for their model, $\mathbb{E}\mathcal{N}\sim c_\alpha n^{3/2}$ for ${m=\alpha n}$, $\alpha\in[0,1]$, $c_\alpha$ only dependent on $\alpha$ ~\cite{Lundberg}.

Although Li and Wei focused mainly on harmonic polynomials of the form
\[h_{n,m}(z)=\sum_{j=0}^na_jz^j+\sum_{j=0}^mb_j\overline z^j\] where $a_j$ and $b_j$ are independent Gaussians, $\mathbb{E}a_j=\mathbb{E}b_j=0$ with $\mathbb{E}a_j\overline a_k=\delta_{jk}\binom{n}{j}$ and $\mathbb{E}b_j\overline b_k=\delta_{jk}\binom{m}{j}$, they also considered what I will refer to as the ``naive'' model of coefficients of random polynomials, namely $\mathbb{E}a_j\overline a_k=\mathbb{E}b_j\overline b_k=\delta_{jk}$ so that the coefficients are i.i.d.  They proved the following ~\cite{LiWei}.
\begin{thm}\label{LiandWei}
The expected number of zeros of
\[h_{n,m}(z)=\sum_{j=0}^na_jz^j+\sum_{j=0}^mb_j\overline{z}^j\]on an open domain $T\subseteq\mathbb{C}$, denoted by $\mathbb{E}\mathcal{N}(T)$, is given by
\[\mathbb{E}\mathcal{N}(T)=\frac{1}{\pi}\int_T\frac{1}{\lvert z\rvert^2}\frac{r_1^2+r_2^2-2r_{12}^2}{r_3^2\sqrt{(r_1+r_2)^2-4r_{12}^2}}d\sigma(z)\]
where $\sigma(\cdot)$ denotes the Lebesgue measure on the complex plane and
\begin{align*}
&r_{12}=\left(\sum_{j=1}^nj\lvert z\rvert^{2j}\right)\left(\sum_{j=1}^mj\lvert z\rvert^{2j}\right),\qquad r_3=\sum_{j=0}^n\lvert z\rvert^{2j},
\\&r_1=r_3\sum_{j=0}^nj^2\lvert z\rvert^{2j}-\left(\sum_{j=0}^nj\lvert z\rvert^{2j}\right)^2,\quad r_2=r_3\sum_{j=0}^mj^2\lvert z\rvert^{2j}-\left(\sum_{j=0}^mj\lvert z\rvert^{2j}\right)^2.
\end{align*}
\end{thm}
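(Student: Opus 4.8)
The plan is to apply the vector-valued Kac--Rice formula to $h_{n,m}$, viewed as a random field $\mathbb{R}^2\to\mathbb{R}^2$ after identifying $\mathbb{C}\cong\mathbb{R}^2$. As a polynomial in the real and imaginary parts of $z$, $h_{n,m}$ is $C^\infty$; for every $z\neq 0$ the relevant Gaussian data $\bigl(h_{n,m}(z),\nabla h_{n,m}(z)\bigr)$ are nondegenerate, by Wilmshurst's theorem~\cite{Wilmshurst} $h_{n,m}$ has only finitely many zeros with probability one, and the set of coefficient vectors producing a degenerate zero (one at which $\det\nabla h_{n,m}=0$) has Lebesgue measure zero, hence is avoided almost surely. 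Granting these nondegeneracy facts, the Kac--Rice formula gives
\[
\mathbb{E}\mathcal{N}(T)=\int_T\mathbb{E}\!\left[\,\bigl|\det\nabla h_{n,m}(z)\bigr|\ \Big|\ h_{n,m}(z)=0\right]\,p_{h_{n,m}(z)}(0)\,d\sigma(z),
\]
so it suffices to compute, for each fixed $z$, the density $p_{h_{n,m}(z)}(0)$ and the conditional expectation of the absolute Jacobian.

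First I would record the identity $\det\nabla h_{n,m}(z)=\lvert\partial_z h_{n,m}(z)\rvert^2-\lvert\partial_{\bar z}h_{n,m}(z)\rvert^2=\lvert p_n'(z)\rvert^2-\lvert q_m'(z)\rvert^2$, where $\partial_z,\partial_{\bar z}$ are the Wirtinger derivatives. Thus the relevant object is the joint law of the three complex Gaussian variables $h_{n,m}(z)$, $p_n'(z)$, $q_m'(z)$. Since the $a_j$ and $b_j$ are independent standard complex Gaussians, $(p_n(z),p_n'(z))$ is independent of $(q_m(z),q_m'(z))$, each circularly symmetric, and every covariance I need is one of the power sums $\sum\lvert z\rvert^{2j}$, $\sum j\lvert z\rvert^{2j}$, $\sum j^2\lvert z\rvert^{2j}$ occurring in the definitions of $r_1,r_2,r_3,r_{12}$. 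Conditioning the pair $\bigl(p_n'(z),q_m'(z)\bigr)$ on $h_{n,m}(z)=0$ via the Gaussian (Schur complement) formula yields a centered, still circularly symmetric, complex Gaussian vector $W\in\mathbb{C}^2$ whose covariance $\Sigma$ has diagonal entries $\Sigma_{11},\Sigma_{22}$ and off-diagonal modulus $\lvert\Sigma_{12}\rvert$ proportional, by one common $z$-dependent scalar, to $r_1,r_2,r_{12}$; and $p_{h_{n,m}(z)}(0)=\bigl(\pi\,\mathbb{E}\lvert h_{n,m}(z)\rvert^2\bigr)^{-1}$.

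What remains is to evaluate $\mathbb{E}\bigl|\lvert p_n'(z)\rvert^2-\lvert q_m'(z)\rvert^2\bigr|=\mathbb{E}\lvert W^{\ast}MW\rvert$ with $M=\operatorname{diag}(1,-1)$. Writing $W=\Sigma^{1/2}Z$ for a standard complex Gaussian $Z$, one has $W^{\ast}MW=Z^{\ast}\bigl(\Sigma^{1/2}M\Sigma^{1/2}\bigr)Z$; the Hermitian matrix $\Sigma^{1/2}M\Sigma^{1/2}$ is $\ast$-congruent to $M$, so by Sylvester's law of inertia it has exactly one positive eigenvalue $\mu$ and one negative eigenvalue $-\nu$. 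Hence $W^{\ast}MW$ has the law of $\mu E_1-\nu E_2$ with $E_1,E_2$ i.i.d.\ $\mathrm{Exp}(1)$, and a short integration gives $\mathbb{E}\lvert\mu E_1-\nu E_2\rvert=(\mu^2+\nu^2)/(\mu+\nu)$. Using $\mu-\nu=\operatorname{tr}(M\Sigma)=\Sigma_{11}-\Sigma_{22}$ and $\mu\nu=\det(M\Sigma)=\det\Sigma$ turns this into $(\Sigma_{11}^2+\Sigma_{22}^2-2\lvert\Sigma_{12}\rvert^2)/\sqrt{(\Sigma_{11}+\Sigma_{22})^2-4\lvert\Sigma_{12}\rvert^2}$; substituting the explicit entries of $\Sigma$ together with the density $p_{h_{n,m}(z)}(0)$ and clearing the common factors collapses the integrand to the claimed expression. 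The degenerate case $m=0$, where $q_m'\equiv 0$ and the Jacobian is automatically nonnegative, reduces to the same computation and is checked directly.

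I expect the main difficulty to be twofold. The analytic point is the rigorous verification of the Kac--Rice hypotheses: that almost surely the zeros are finite in number and all nondegenerate, so that no multiplicity or boundary effects intervene and the metatheorem applies verbatim. The computational point is the bookkeeping in the last step --- tracking the common $z$-dependent prefactors through the Schur complement, the eigenvalue identities, and the Gaussian density, and checking that they really do combine into the single expression $\dfrac{1}{\lvert z\rvert^{2}}\cdot\dfrac{r_1^2+r_2^2-2r_{12}^2}{r_3^2\sqrt{(r_1+r_2)^2-4r_{12}^2}}$. Everything else is a routine sequence of Gaussian moment evaluations.
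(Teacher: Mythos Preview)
The paper does not contain a proof of this theorem; it is quoted from Li and Wei~\cite{LiWei} and serves only as the starting point for the paper's own asymptotic results in Sections~3 and~4. Your Kac--Rice outline is precisely the Li--Wei argument, and its structure is correct.

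Two small points of bookkeeping. First, the triple $(h,p_n',q_m')$ is \emph{not} jointly circularly symmetric: the pseudo-covariance $\mathbb{E}[h\,q_m']=\mathbb{E}\bigl[\overline{q_m}\,q_m'\bigr]$ is nonzero, so the Schur-complement step as you wrote it does not directly produce a proper complex Gaussian pair. The fix is immediate --- work instead with the Wirtinger derivatives $(\partial_z h,\partial_{\bar z}h)=(p_n',\overline{q_m'})$, for which $(h,\partial_z h,\partial_{\bar z}h)$ \emph{is} jointly circular, and $\lvert\partial_z h\rvert^2-\lvert\partial_{\bar z}h\rvert^2$ is still the indefinite form $W^{\ast}MW$ with $M=\operatorname{diag}(1,-1)$; your eigenvalue computation then goes through verbatim. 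Second, your density step gives $\mathbb{E}\lvert h(z)\rvert^2=\sum_{j=0}^n\lvert z\rvert^{2j}+\sum_{j=0}^m\lvert z\rvert^{2j}$, so the $r_3$ that actually emerges from the calculation is the sum of \emph{both} series, not the single sum printed in the statement. The paper itself uses $r_3=a_n+a_m$ throughout Sections~3 and~4, so the displayed $r_3$ in the theorem is a typo and your computation will land on the version used downstream.
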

Also in the publication by the same authors,
\begin{quote}
Numerical analysis suggests that the asymptotic of above expectation for $T=\mathbb{C}$ is $\lim_{n\rightarrow\infty}\mathbb{E}\mathcal{N}(\mathbb{C})/n=1$ for fixed $m$, but a rigorous analytic asymptotic hasn't been found.
\end{quote}
We will prove this analytically, that is,
\begin{thm}\label{myTheorem}
$\mathbb{E}\mathcal{N}(\mathbb{C})\sim n$ for fixed $m$.
\end{thm}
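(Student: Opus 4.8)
The plan is to exploit the rotational symmetry of the integrand in Theorem~\ref{LiandWei}. It depends on $z$ only through $r=\lvert z\rvert$, so passing to polar coordinates collapses the formula to the one-dimensional integral
\[\mathbb{E}\mathcal{N}(\mathbb{C})=2\int_0^\infty I_n(r)\,dr,\qquad I_n(r):=\frac1r\,\frac{r_1^2+r_2^2-2r_{12}^2}{r_3^2\sqrt{(r_1+r_2)^2-4r_{12}^2}},\]
where $r_1,r_2,r_{12},r_3$ are the stated polynomials in $t=r^2$. The organizing idea is that for fixed $m$ the conjugate summand $\overline{q_m}$ is a bounded-degree perturbation, so $\mathcal{N}_{n,m}$ should match, to leading order, the count $n$ for the purely analytic case. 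I would make this precise by comparing $I_n$ with its $m=0$ specialization $J_n(r):=r_1/(r\,r_3^2)$, obtained by setting $r_2=r_{12}=0$: applying Theorem~\ref{LiandWei} with $m=0$, where $h_{n,0}$ is an analytic polynomial of degree $n$ and hence $\mathbb{E}\mathcal{N}_{n,0}=n$, yields the exact identity $2\int_0^\infty J_n(r)\,dr=n$ (equivalently, $J_n$ is the radial form of the Edelman--Kostlan density $\tfrac1\pi\,r_1/(\lvert z\rvert^2r_3^2)$ of the complex Kac polynomial $p_n$). It therefore suffices to prove
\[\int_0^\infty\bigl\lvert I_n(r)-J_n(r)\bigr\rvert\,dr=o(n)\qquad(n\to\infty).\]

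To bound the difference I would start from the identity
\[I_n-J_n=\frac{1}{r\,r_3^2}\left[\frac{r_1^2+r_2^2-2r_{12}^2}{\sqrt{(r_1+r_2)^2-4r_{12}^2}}-r_1\right]\]
and expand the bracket in the two ratios $a=r_2/r_1\ge0$ and $b=r_{12}^2/r_1^2\ge0$: whenever $a\le\tfrac14$ and $b\le\tfrac14$ a short computation gives $[\cdots]=r_1\bigl(-a+O(a^2+b)\bigr)$, hence $\lvert I_n-J_n\rvert\lesssim\bigl(r_2+r_2^2/r_1+r_{12}^2/r_1\bigr)/(r\,r_3^2)$ there. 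So it is enough to show that each of $\int r_2/(rr_3^2)\,dr$, $\int r_2^2/(rr_1r_3^2)\,dr$, $\int r_{12}^2/(rr_1r_3^2)\,dr$ is $O(1)$ on the region where $a,b\le\tfrac14$, and that on the complementary region $I_n$ and $J_n$ are bounded and supported on a set of bounded measure (so that region contributes $O(1)$ as well).

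I would carry this out with a three-zone decomposition about the unit circle, using the closed generating-function forms of the $r_i$. For $r$ in a fixed compact subinterval of $[0,1)$ the partial sums $\sum_{j\le n}j^kt^j$ increase to $\sum_{j\ge0}j^kt^j$, so all of $r_1,r_2,r_{12},r_3$ converge and dominated convergence bounds that zone by $O_\epsilon(1)$ (near $r=0$, where $a\to1$ is not small, one uses instead the direct bounds $I_n,J_n=O(r)$ coming from $r_1,r_2=t+O(t^2)$ and $r_{12}=O(t^2)$); for $r\ge1+\epsilon$ the ratios $r_1/r_3^2,\,r_2/r_3^2,\,r_{12}/r_3^2$ converge, $a$ and $b$ are in fact exponentially small in $n$, and the integrands are dominated by multiples of $r/(r^2-1)^2$, so this zone is again $O_\epsilon(1)$; on both zones the complementary region $\{a>\tfrac14\}\cup\{b>\tfrac14\}$ causes no trouble since $I_n$ and $J_n$ are already bounded there. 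All growth of order $n$ is confined to the annulus $\lvert r-1\rvert<\epsilon$, where --- taking $\epsilon$ small in terms of $m$ so that $a,b\le\tfrac14$ throughout --- I would substitute $u=n(t-1)$ and use the Riemann-sum estimates $\sum_{j\le n}j^kt^j\approx n^{k+1}A_k(u)$ with $A_k(u)=\int_0^1x^ke^{ux}\,dx$, giving $r_3\approx nA_0(u)$, $r_1\approx n^4(A_0A_2-A_1^2)$, $r_2\approx c_2(m)\,nA_0(u)$, $r_{12}=O\bigl(n^2A_1(u)\bigr)$ with $c_k(m):=\sum_{j\le m}j^k$, and $dr\approx du/(2n)$. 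Each of the three annulus integrals then has the form $O\!\bigl(n^{-\ell}\int_{\lvert u\rvert\lesssim n}\lvert u\rvert^{\ell-1}\,du\bigr)=O(1)$. Summing the three zones gives $\int_0^\infty\lvert I_n-J_n\rvert\,dr=O(1)$, hence $\mathbb{E}\mathcal{N}(\mathbb{C})=n+O(1)\sim n$. (As a consistency check, feeding the same substitution into $J_n$ itself recovers $\mathbb{E}\mathcal{N}(\mathbb{C})\approx n\int_{-\infty}^\infty V(u)\,du$ with $V=(\log A_0)''$, and $\int_{-\infty}^\infty V=[(\log A_0)']_{-\infty}^{+\infty}=1-0=1$.)

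I expect the crux to be the uniform analysis in the transition annulus: one needs the approximations for $\sum_{j\le n}j^kt^j$, and for the composites $r_1$ and $\sqrt{(r_1+r_2)^2-4r_{12}^2}$, to hold \emph{uniformly} over the whole range $\lvert u\rvert\lesssim n$ rather than merely on fixed $u$-windows, together with a quantitative lower bound $(r_1+r_2)^2-4r_{12}^2\gtrsim r_1^2$ on the portion of that range where $r_1\to\infty$. Nonnegativity of the radicand is automatic from the Cauchy--Schwarz structure behind Li--Wei's covariance computation, but its quantitative form relies on the smallness of $a$ and $b$ recorded above. A minor separate point is that $\epsilon$ must be shrunk depending on the fixed $m$, so that $a$ and $b$ stay below $\tfrac14$ across the annulus; this is harmless since $m$ is constant here.
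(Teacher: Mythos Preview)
Your strategy is sound and genuinely different from the paper's. You subtract off the exact analytic ($m=0$) contribution and bound the remainder by a three-zone analysis; the paper instead applies the General Lebesgue Dominated Convergence Theorem directly to the full rescaled integrand after the same substitution $\lvert z\rvert^2=1+t/n$. The paper's key device is that the dominating sequence $g_n$ can be chosen so that $\int_{\mathbb{R}}g_n$ is computable \emph{exactly} by the Fundamental Theorem of Calculus: the main piece of $g_n$ is recognized as $\tfrac{1}{n}\tfrac{d}{dt}\bigl[(2b_n-b_m)/(a_n+a_m)\bigr]$, and the two correction terms also have elementary antiderivatives, giving $\int_{\mathbb{R}}g_n=2+m^3\pi/n+2m^3/(n(n-m))\to2=\int_{\mathbb{R}}g$. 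This completely sidesteps the uniformity problem you flag, since GLDCT asks only for pointwise convergence of $f_n,g_n$ together with convergence of $\int g_n$; no zone decomposition and no uniform Riemann-sum control are needed. Your route is more conceptual (the perturbation-from-analytic picture is transparent) and, if completed, would deliver the sharper $\mathbb{E}\mathcal{N}=n+O(1)$ rather than merely $\sim n$.

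The gap in your outline is precisely the one you name. The approximations $\sum_{j\le n}j^k(1+u/n)^j\approx n^{k+1}A_k(u)$ are not uniform over $\lvert u\rvert\lesssim n$: the error in $(1+u/n)^j\approx e^{ju/n}$ is governed by $j(u/n)^2$, which reaches order $n\epsilon^2$ at the edge of the annulus, so the annulus bounds are not yet established. One repair is to shrink $\epsilon=\epsilon_n\to0$ slowly and show the enlarged inner and outer zones still contribute $o(n)$; another is to imitate the paper and seek exact antiderivatives for your three remainder integrands (for instance, $r_2/(r\,r_3^2)$ differs from a constant times $\tfrac{d}{dt}\bigl[b_m/(a_n+a_m)\bigr]$ only by lower-order pieces). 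A minor separate issue: your $J_n$ is ambiguous, since ``the $m=0$ specialization'' and ``setting $r_2=r_{12}=0$ in $I_n$'' are not identical ($r_1$ and $r_3$ also carry $a_m$); either version works, but fix one and note that the discrepancy is itself $O(1)$ after integration.
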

\noindent We will also explore the case when $n=m$.  We conjecture $\mathbb{E}\mathcal{N}(\mathbb{C})\sim c n\log n$ for some constant $c>0$, because we were able to find both an upper bound and lower bound of this order.
\begin{thm}\label{myTheorem2}
When $n=m$, there exists $c_2> c_1>0$ such that for $n$ sufficiently large\[c_1\le\frac{1}{n\log n}\mathbb{E}\mathcal{N}(\mathbb{C})\le c_2.\]
\end{thm}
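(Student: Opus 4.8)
The plan is to estimate the integral of Theorem~\ref{LiandWei} directly in the case $m=n$. Put $t=|z|^{2}$ and $A_{k}(t)=\sum_{j=0}^{n}j^{k}t^{j}$ for $k=0,1,2$, so that $r_{3}=A_{0}$, $r_{1}=r_{2}=A_{0}A_{2}-A_{1}^{2}$, and $r_{12}$ is the cross term of Theorem~\ref{LiandWei}; since $r_{1}=r_{2}$ the numerator $r_{1}^{2}+r_{2}^{2}-2r_{12}^{2}$ and the radicand $(r_{1}+r_{2})^{2}-4r_{12}^{2}$ both factor, the integrand is radial, and polar coordinates collapse the formula to $\mathbb{E}\mathcal{N}(\mathbb{C})=\int_{0}^{\infty}F_{n}(t)\,dt$ with $F_{n}$ an explicit elementary function of $A_{0},A_{1},A_{2}$. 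I would first record closed forms for the $A_{k}$ --- rational in $t$ away from $t=1$, polynomial in $n$ at $t=1$ --- together with the reindexing identities from $j\mapsto n-j$: $A_{0}(1/t)=t^{-n}A_{0}(t)$, $A_{1}(1/t)=t^{-n}(nA_{0}-A_{1})(t)$, $A_{2}(1/t)=t^{-n}(n^{2}A_{0}-2nA_{1}+A_{2})(t)$, so that in particular $A_{0}A_{2}-A_{1}^{2}$ transforms under $t\mapsto 1/t$ by the single factor $t^{-2n}$. These identities are the main device for transferring estimates between $\{t<1\}$ and $\{t>1\}$.

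Next I would locate the mass of $\int_{0}^{\infty}F_{n}$. For fixed $r<1$, $\int_{0}^{r^{2}}F_{n}$ is the expected number of zeros of $h_{n,n}$ in $\{|z|<r\}$; on $[0,1-c/n]$ the sums $A_{k}$ stay within a bounded factor of their $n=\infty$ geometric values $A_{0}\sim(1-t)^{-1}$, $A_{1}\sim t(1-t)^{-2}$, $A_{2}\sim t(1+t)(1-t)^{-3}$, so there $F_{n}$ is comparable to $(1-t)^{-2}$, which is $\asymp n^{2}$ at the inner edge $t\sim 1-1/n$ and integrates to $O(n)$; together with $F_{n}\asymp n^{2}$ on $[1-c/n,1]$ this gives $\int_{0}^{1}F_{n}=O(n)$. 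Hence both bounds of Theorem~\ref{myTheorem2} must come from $t\ge 1$, i.e.\ from $|z|$ at or just outside the unit circle. The decisive feature is that for $t>1$ the sum $A_{k}(t)=\sum j^{k}t^{j}$ is dominated by the top of its range $j\approx n$, so $A_{k}$ is of order $t^{n}$ times an extra factor $\sim n^{k}$ that is absent when $t<1$; after using the reindexing identities to cancel the common powers $t^{n}$ one finds, for $c/n\le t-1\le\delta$,
\[
F_{n}(t)\ \asymp\ \frac{n}{t-1},
\]
while $F_{n}\asymp n^{2}$ on the core $1\le t\le 1+c/n$ and $F_{n}$ decays like $n\,t^{-3/2}$ on the tail $t\ge 1+\delta$. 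Integrating $n/(t-1)$ over the logarithmic range $1/n\lesssim t-1\lesssim\delta$ produces the factor $n\log n$, and the core and the tail each contribute only $O(n)$: this is the mechanism behind Theorem~\ref{myTheorem2}.

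To make this a two-sided statement I would argue the bounds separately. For the lower bound it is enough to keep only the range $[1+c_{0}/n,\,1+\delta_{0}]$ for convenient absolute constants $c_{0},\delta_{0}$, bound $F_{n}(t)\ge c_{1}'\,n/(t-1)$ there from lower estimates for $A_{0},A_{1},A_{2}$, and integrate to get $\mathbb{E}\mathcal{N}(\mathbb{C})\ge c_{1}\,n\log n$. For the upper bound I would build a global majorant for $F_{n}$ --- $O(n^{2})$ on $|t-1|\le c/n$, $O\!\big(n/(t-1)\big)$ on $c/n\le t-1\le\delta$, $O((1-t)^{-2})$ on $c/n\le 1-t\le\delta$, $O(n\,t^{-3/2})$ on $t\ge 1+\delta$, and $O(1)$ near $t=0$ --- and add the resulting pieces up to $\mathbb{E}\mathcal{N}(\mathbb{C})\le c_{2}\,n\log n$.

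The main obstacle, in both directions, is the near-cancellation on $t>1$. With $r_{1}=r_{2}$ one has $r_{1}^{2}+r_{2}^{2}-2r_{12}^{2}=2(r_{1}-r_{12})(r_{1}+r_{12})$ and $(r_{1}+r_{2})^{2}-4r_{12}^{2}=4(r_{1}-r_{12})(r_{1}+r_{12})$, so $F_{n}$ is governed by $r_{1}-r_{12}$, a difference of two quantities that agree to leading (indeed to several) orders when $t>1$ and $n$ is large; extracting the true size of $r_{1}-r_{12}$ sharply and uniformly over the whole logarithmic range $c/n\le t-1\le\delta$ requires the asymptotics of $A_{0},A_{1},A_{2}$ for $t$ slightly larger than $1$ carried well past the leading term, and this is the technical heart of the argument. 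A secondary point is to make the estimates on the two overlapping scales --- $t-1\asymp 1/n$, where the $A_{k}$ look like $n^{k+1}$ times functions of $n(t-1)$, and $t-1\gg 1/n$, where the geometric asymptotics apply --- patch together with uniform constants, which is what lets the final bounds be stated with absolute $c_{1}<c_{2}$.
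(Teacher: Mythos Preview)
Your outline is correct and identifies the same mechanism as the paper: after the reduction $r_1=r_2$ the integrand becomes (up to an absolute constant) $\dfrac{\sqrt{A_0A_2\,(A_0A_2-A_1^2)}}{t\,A_0^2}$, and the $n\log n$ arises from the range $t-1\in[c/n,\delta]$ where this quantity is $\asymp n/(t-1)$. The paper likewise splits into $I=[0,1-\tfrac1n)$, $J=[1-\tfrac1n,1+\tfrac1n]$, $K=(1+\tfrac1n,\infty)$ and shows that only $K$ contributes at order $n\log n$.

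Where the paper differs from your plan is in the device used to control the near-cancellation you call ``the technical heart.'' Instead of multi-term asymptotics for $A_0,A_1,A_2$ and the reindexing identities $t\mapsto 1/t$, the paper observes that $A_1=t\,\frac{d}{dt}A_0$, $A_2=t\,\frac{d}{dt}A_1$, and hence, using the closed form $A_0=(t^{n+1}-1)/(t-1)$,
\[
\frac{A_0A_2-A_1^2}{t\,A_0^2}\;=\;\frac{d}{dt}\!\left[\frac{A_1}{A_0}\right]\;=\;\frac{1}{(t-1)^2}\left(1-\frac{(n+1)^2\,t^{\,n}}{A_0^2}\right).
\]
This exact identity makes the cancellation completely explicit: the bracketed factor lies in $[0,1]$ for all $t>0$, and a short monotonicity argument shows it is bounded below by a positive absolute constant on all of $K$ (its value at $t=1+\tfrac1n$ tends to $1-e(e-1)^{-2}>0$). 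Together with the elementary two-sided bound $A_2/(n^2A_0)\in[\tfrac1{16},1]$ on $K$, this immediately gives $F_n(t)\asymp n/\!\big(\sqrt{t}\,(t-1)\big)$ uniformly on $K$, handling your ``logarithmic range'' and ``tail'' in a single stroke without any expansion past the leading term. Your route via reindexing and higher-order asymptotics would also succeed, but the derivative identity above is the cleaner shortcut and removes precisely the obstacle you anticipated.
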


\noindent {\bf Acknowledgment.}
I wish to thank Erik Lundberg and Antonio Lerario for leading me in the direction of this problem and discussing it with me at length.

\section{Preliminary Results}
The main tool in proving Theorem 2 will be the generalization of a common theorem in Real Analysis, the General Lebegue Dominated Convergence Theorem; see ~\cite{Royden} for proof.
\begin{thm}[General Lebegue Dominated Convergence Theorem]\label{GLDCT}
Let $\{f_n\}$ be a sequence of measurable functions on $E$ that converges pointwise almost everywhere on $E$ to $f$.  Suppose there is a sequence $\{g_n\}$ of nonnegative measurable functions on $E$ that converges pointwise almost everywhere on $E$ to $g$ and dominates $\{f_n\}$ on $E$ in the sense that $\lvert f_n\rvert\le g_n$ on $E$ for all $n$.  If \[\lim_{n\rightarrow\infty}\int_Eg_n=\int_Eg<\infty,\ \ then\ \ \lim_{n\rightarrow\infty}\int_Ef_n=\int_Ef.\]
\end{thm}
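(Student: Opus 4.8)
The plan is to reduce this to Fatou's Lemma, which I take as known for nonnegative measurable functions. First I would check that every integral in the statement is meaningful. Since each $f_n$ is measurable and $f_n\to f$ almost everywhere, $f$ is measurable; letting $n\to\infty$ in $\lvert f_n\rvert\le g_n$ gives $\lvert f\rvert\le g$ almost everywhere, and since $\int_E g<\infty$ this makes $f$ integrable, while each $f_n$ is integrable because $\lvert f_n\rvert\le g_n$ with $\int_E g_n$ finite. The functions are real-valued, so the hypothesis $\lvert f_n\rvert\le g_n$ is equivalent to $-g_n\le f_n\le g_n$, and this observation is what drives the whole argument.

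The key step is to apply Fatou's Lemma not to $\{f_n\}$ but to the two sequences $\{g_n+f_n\}$ and $\{g_n-f_n\}$, each of which is nonnegative precisely because $-g_n\le f_n\le g_n$. Almost everywhere $g_n+f_n\to g+f$, so Fatou gives
\[\int_E(g+f)\le\liminf_{n\to\infty}\int_E(g_n+f_n)=\int_E g+\liminf_{n\to\infty}\int_E f_n,\]
where I have used $\int_E g_n\to\int_E g$ to pull the convergent $g_n$-integrals out of the $\liminf$. Because $\int_E g<\infty$, I may subtract it from both sides — this is the one place where finiteness is essential — leaving $\int_E f\le\liminf_{n\to\infty}\int_E f_n$.

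Running the identical argument on $\{g_n-f_n\}$, whose almost-everywhere limit is $g-f$, yields $\int_E(g-f)\le\int_E g-\limsup_{n\to\infty}\int_E f_n$ after using $\liminf(-a_n)=-\limsup a_n$; cancelling the finite $\int_E g$ once more gives $\limsup_{n\to\infty}\int_E f_n\le\int_E f$. The two inequalities together sandwich the sequence of integrals,
\[\limsup_{n\to\infty}\int_E f_n\le\int_E f\le\liminf_{n\to\infty}\int_E f_n,\]
forcing equality and hence $\lim_{n\to\infty}\int_E f_n=\int_E f$. I expect the only real obstacle to be careful bookkeeping: tracking the sign change that turns a $\liminf$ into a $\limsup$ in the second application, and making sure each cancellation of $\int_E g$ is legitimate, which is exactly why the hypothesis $\int_E g<\infty$ cannot be dropped.
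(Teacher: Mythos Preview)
Your argument is correct and is exactly the standard proof via Fatou's Lemma applied to the nonnegative sequences $g_n\pm f_n$. The paper does not supply its own proof of this theorem but simply cites Royden, where precisely this argument appears, so your proposal matches the intended reference.
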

\noindent Thus in section \ref{MyThmSection} we search for the appropriate sequence $g_n$ in order to reach the result.

To maintain the clarity of the proof of the theorems, let us discuss a few of the inequalities used.
\begin{lemma}\label{lemma1}
If $x$ is a positive real number and $\alpha_k=\sum_{j=0}^kx^j$, $\beta_k=\sum_{j=1}^kjx^j$, and $\gamma_k=\sum_{j=1}^kj^2x^j$, then $(\alpha_n+\alpha_m)\gamma_n\gamma_m\ge \gamma_n\beta_m^2+\gamma_m\beta_n^2$, for $n,m\in\mathbb{N}$.
\end{lemma}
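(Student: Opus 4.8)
The plan is to deduce this two-index inequality from the single-index estimate $\alpha_k\gamma_k\ge\beta_k^2$, which is nothing more than the Cauchy--Schwarz inequality. First I would record the trivial but necessary observations that, since $x>0$, each of $\alpha_k,\beta_k,\gamma_k$ is nonnegative, and that there is a harmless index shift $\sum_{j=1}^k x^j=\alpha_k-1$ (the term $j=0$ is present in $\alpha_k$ but not in $\beta_k$ or $\gamma_k$).

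For the single-index bound, write $\beta_k=\sum_{j=1}^k\bigl(x^{j/2}\bigr)\bigl(j\,x^{j/2}\bigr)$ and apply Cauchy--Schwarz to the vectors $\bigl(x^{j/2}\bigr)_{j=1}^k$ and $\bigl(j\,x^{j/2}\bigr)_{j=1}^k$, obtaining
\[
\beta_k^2\le\Bigl(\sum_{j=1}^k x^j\Bigr)\Bigl(\sum_{j=1}^k j^2x^j\Bigr)=(\alpha_k-1)\gamma_k\le\alpha_k\gamma_k .
\]
(Equivalently, one may note that $\gamma_k t^2-2\beta_k t+(\alpha_k-1)=\sum_{j=1}^k x^j(jt-1)^2\ge0$ for every real $t$, so the discriminant of this quadratic in $t$ is nonpositive, which is the same inequality.)

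To finish, apply the displayed estimate once with $k=m$ and once with $k=n$, multiply the first by $\gamma_n\ge0$ and the second by $\gamma_m\ge0$, and add:
\[
\gamma_n\beta_m^2+\gamma_m\beta_n^2\le\gamma_n\gamma_m\alpha_m+\gamma_m\gamma_n\alpha_n=(\alpha_n+\alpha_m)\gamma_n\gamma_m ,
\]
which is exactly the asserted inequality. I do not expect any real obstacle here: the only things demanding a moment's care are the bookkeeping of the $j=0$ term in $\alpha_k$ and the use of $x>0$ (hence $\gamma_n,\gamma_m\ge0$) to be sure the inequalities survive the multiplication and addition.
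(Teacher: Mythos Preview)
Your argument is correct and is precisely the approach the paper takes: the paper also derives $\alpha_k\gamma_k\ge\beta_k^2$ from Cauchy--Schwarz and notes that the two-index inequality follows immediately. Your write-up simply makes explicit the multiplication by $\gamma_n,\gamma_m\ge0$ and the addition that the paper leaves to the reader.
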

That $\alpha_k\gamma_k\ge\beta_k^2$ is a direct application of Cauchy-Schwarz, and the lemma follows.
\begin{lemma}\label{lemma2}
Let $\phi$ and $\psi$ be non-negative functions over $A\subseteq\mathbb{R}$ and let $f=\sqrt{\phi\psi}$.  Then $f\le\phi+\psi$ and $f$ is integrable over $A$ if $\phi$ and $\psi$ are.
\end{lemma}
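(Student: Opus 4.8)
The plan is to establish the pointwise bound first and then deduce integrability from it. For the inequality, I would argue pointwise on $A$: for any $x\in A$ the values $\phi(x)$ and $\psi(x)$ are non-negative reals, so by the AM--GM inequality
\[
f(x)=\sqrt{\phi(x)\psi(x)}\le\frac{\phi(x)+\psi(x)}{2}\le\phi(x)+\psi(x),
\]
using non-negativity of both functions in the last step. This is the entire content of the first assertion, and there is really no obstacle here; it is just AM--GM applied at each point.

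For integrability, suppose $\phi$ and $\psi$ are integrable over $A$. Then $\phi+\psi$ is integrable, being a finite sum of integrable functions. Since $f=\sqrt{\phi\psi}$ is measurable (a composition of the continuous function $\sqrt{\,\cdot\,}$ with the measurable function $\phi\psi$) and non-negative, and we have just shown $0\le f\le\phi+\psi$ on $A$, the comparison test for Lebesgue integrals gives $\int_A f\le\int_A(\phi+\psi)<\infty$, so $f$ is integrable over $A$.

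There is no genuine difficulty in this lemma; the only point requiring a word of care is the measurability of $f$, which follows because $\phi\psi$ is measurable whenever $\phi$ and $\psi$ are, and $t\mapsto\sqrt t$ is continuous on $[0,\infty)$. The lemma's role is purely bookkeeping: in Section~\ref{MyThmSection} it will be combined with Lemma~\ref{lemma1} to produce an integrable dominating function $g_n$ of the form $\phi_n+\psi_n$ for the integrand appearing in Theorem~\ref{LiandWei}, so that the General Lebesgue Dominated Convergence Theorem (Theorem~\ref{GLDCT}) can be applied. Accordingly I would keep the proof to the two or three lines above.
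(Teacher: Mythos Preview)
Your proof is correct and follows essentially the same approach as the paper: the paper also invokes the AM--GM inequality to get $f\le\tfrac{\phi+\psi}{2}$, then relaxes this to $f\le\phi+\psi$, and deduces integrability from the domination by an integrable function. Your version is slightly more explicit about the measurability of $f$, but otherwise the arguments coincide.
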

\begin{proof}
We first notice that the arithmetic mean of $\phi$ and $\psi$ is $\frac{\phi+\psi}{2}$ while the geometric mean is $f$.  The arithmetic and geometric mean inequality says that $f\le\frac{\phi+\psi}{2}$, which we relax for the purposes of this lemma.  Since $f$ is a positive measurable function, integrability follows.
\end{proof}

\section{Proof of Theorem \ref{myTheorem}}\label{MyThmSection}
Within the context of Theorem \ref{LiandWei}, Theorem \ref{myTheorem} can be restated as follows
\begin{equation}\label{originalIntegral}
\lim_{n\rightarrow\infty}\frac{1}{n\pi}\int_{\mathbb{C}}\frac{1}{|z|^2}\frac{r_1^2+r_2^2-2r_{12}^2}{r_3^2\sqrt{(r_1+r_2)^2-4r_{12}^2}}d\sigma(z)=1.
\end{equation}
Beginning with the integral on the left side of (\ref{originalIntegral}), using polar coordinates and then a substitution of $\lvert z\rvert^2=1+\frac{t}{n}$ gives us
\[\lim_{n\rightarrow\infty}\int_{-n}^\infty\frac{1}{n(n+t)}\frac{r_1^2+r_2^2-2r_{12}^2}{r_3^2\sqrt{(r_1+r_2)^2-4r_{12}^2}}dt\qquad\text{where}\]
\[r_3=\sum_{j=0}^n\left(1+\textstyle\frac{t}{n}\right)^j+\sum_{j=0}^m\left(1+\textstyle\frac{t}{n}\right)^j\qquad r_{12}=\left(\sum_{j=1}^nj\left(1+\textstyle\frac{t}{n}\right)^j\right)\left(\sum_{j=1}^mj\left(1+\textstyle\frac{t}{n}\right)^j\right)\]
\[r_1=r_3\sum_{j=1}^nj^2\left(1+\textstyle\frac{t}{n}\right)^j-\left(\sum_{j=1}^nj\left(1+\textstyle\frac{t}{n}\right)^j\right)^2\] \[r_2=r_3\sum_{j=1}^mj^2\left(1+\textstyle\frac{t}{n}\right)^j-\left(\sum_{j=1}^mj\left(1+\textstyle\frac{t}{n}\right)^j\right)^2.\]
We use $\chi_n$ to be the function yielding 1 for values greater than $-n$ and 0 otherwise and, for a fixed $m$, define
\[f_{n}(t)=\chi_n(t)\frac{r_1^2+r_2^2-2r_{12}^2}{n(n+t)r_3^2\sqrt{(r_1+r_2)^2-4r_{12}^2}}.\]
We define the following functions of $t$ to simplify notation:
\[a_k=\sum_{j=0}^k\left(1+\frac{t}{n}\right)^j,\qquad b_k=\sum_{j=1}^kj\left(1+\frac{t}{n}\right)^j,\qquad c_k=\sum_{j=1}^kj^2\left(1+\frac{t}{n}\right)^j.\]

It follows from Lemma \ref{lemma1} and $r_3\ge0$ that
\[r_1r_2=r_3\left((a_n+a_m)c_nc_m-c_nb_m^2-c_mb_n^2\right)+r_{12}^2\ge r_{12}^2.\]
Using this, we can bound  $f_{n}$ 
\begin{align*}f_{n}(t)\le\frac{\sqrt{r_1^2+r_2^2-2r_{12}^2}}{n(n+t)r_3^2}&=\frac{\sqrt{(r_1-r_2)^2+2(r_1r_2-r_{12}^2)}}{n(n+t)r_3^2}\\&\le\frac{r_1-r_2}{n(n+t)r_3^2}+\sqrt{2}\frac{\sqrt{r_1r_2-r_{12}^2}}{n(n+t)r_3^2}.
\end{align*}
Then since
\[\frac{r_1r_2-r_{12}^2}{n(n+t)r_3^2}=\frac{c_m}{n^2(n+t)(a_n+a_m)}\cdot\frac{(a_n+a_m)c_n-b_n^2-\frac{b_m^2c_n}{c_m}}{(a_n+a_m)^2}\]and $b_nc_m\le b_mc_n$, by Lemma \ref{lemma2},
\[f_n(t)\le\chi_n(t)\left[\frac{r_1-r_2}{n(n+t)r_3^2}+\frac{2c_m}{n^2(1+\frac{t}{n})(a_n+a_m)}+\frac{(a_n+a_m)c_n-b_n^2-b_nb_m}{n(n+t)(a_n+a_m)^2}\right].\]
We can see $\frac{c_m}{1+t/n}<m^3$ when $t<0$ and $c_m<m^3(1+\frac{t}{n})^m$ when $t\ge 0$.  When $n>1$, we have $a_n+a_m\ge1+(1+\frac{t}{n})^2$, so we see that
\[f_n(t)\le\chi_n(t)\!\!\left[\!\frac{(a_n+a_m)(2c_n-c_m)-2b_n^2+b_m^2-b_nb_m}{n(n+t)(a_n+a_m)^2}+\left\{\begin{array}{ll}
\frac{2m^3}{n^2(1+(1+\frac{t}{n})^2)}&t<0
\\\frac{2m^3}{n^2(1+\frac{t}{n})^{n-m+1}}&t\ge0
\end{array}\right.\right]\!.\]The above bound for $f_n$ we will call $g_n$.

Now the integrability of $g_n$ is important if we wish to use Theorem \ref{GLDCT}.  We first note that $b_k=(n+t)\frac{d}{dt}[a_k]$ and $c_k=(n+t)\frac{d}{dt}[b_k]$.  From this we see
\[\frac{d}{dt}\left[\frac{2b_n-b_m}{a_n+a_m}\right]=\frac{(a_n+a_m)(2c_n-c_m)-2b_n^2-b_nb_m+b_m^2}{(n+t)(a_n+a_m)^2}.\]
Furthermore,
\[\frac{d}{dt}\left[\frac{2m^3}{n}\arctan\left(1+\textstyle{\frac{t}{n}}\right)\right]=\frac{2m^3}{2n^2+2nt+t^2}\]
and
\[\frac{d}{dt}\left[\frac{-2m^3}{n(n-m)\left(1+\textstyle{\frac{t}{n}}\right)^{n-m}}\right]=\frac{2m^3}{n^2\left(1+\textstyle{\frac{t}{n}}\right)^{n-m+1}}.\]Then $g_n$ has an antiderivative when $t\ne0$ or $-n$,
\[\int g_n=\frac{\chi_n}{n}\left[\frac{2b_n-b_m}{a_n+a_m}+2m^3\left\{\begin{array}{ll}\arctan\left(1+\textstyle{\frac{t}{n}}\right)&t<0\\\\\frac{-1}{(n-m)\left(1+\textstyle{\frac{t}{n}}\right)^{n-m}}&t>0
\end{array}\right.\right].\]
It follows that $\int_\mathbb{R}g_n=2+\frac{m^3\pi}{n}+\frac{2m^3}{n(n-m)}$ and clearly the limit as $n\rightarrow\infty$ of this integral is $2$.

Next we compute the point-wise limit of $g_n$ as $n\rightarrow\infty$, which we will call $g$.  Because $\sum_0^kx^j=\frac{x^{k+1}-1}{x-1}$ when $x\ne1$, then for $t\ne0$, $a_k=\frac{n}{t}\left(\left(1+\frac{t}{n}\right)^{k+1}-1\right)$.  Then
\begin{align*}&b_k=\textstyle{\frac{n+t}{t^2}}\left((kt-n)\left(1+\textstyle{\frac{t}{n}}\right)^k+n\right),\\\text{and}\quad &c_k=\textstyle\frac{n+t}{t^3}\left((k^2t^2-n(2k-1)t+2n^2)\left(1+\frac{t}{n}\right)^{k}-2n^2-nt\right).
\end{align*}
Then, to compute $g$, we can use the following limits,
\[\lim_{n\rightarrow\infty}\frac{a_n}{n+t}=\textstyle{\frac{1}{t}}(e^t-1)\quad\displaystyle\lim_{n\rightarrow\infty}\frac{b_n}{n(n+t)}=\textstyle{\frac{1}{t^2}}((t-1)e^t+1)\]
\[\lim_{n\rightarrow\infty}\frac{c_n}{n^2(n+t)}=\textstyle\frac{1}{t^3}\left((t^2-2t+2)e^t-2\right).\]
It is clear that for a fixed $t$ and $m$, the limits as $n$ increases to infinity of $a_m/(n+t)$, $b_m/(n(n+t))$ and $c_m/(n^2(n+t))$ are all $0$ as well as the limits of $\frac{2m^3}{n^2(1+(1+\frac{t}{n})^2)}$ and $\frac{2m^3}{n^2(1+\frac{t}{n})^{n-m+1}}$.  Then we can calculate
\[g(t)=2\frac{e^{2t}-(t^2+2)e^t+1}{t^2(e^t-1)^2}=2\frac{d}{dt}\left[\frac{(t-1)e^t+1}{t(e^t-1)}\right].\]
Meanwhile, if $f(t)$ is the point-wise limit of $f_n(t)$, then
\[f(t)=\frac{e^{2t}-(t^2+2)e^t+1)}{t^2(e^t-1)^2}=\frac{d}{dt}\left[\frac{(t-1)e^t+1}{t(e^t-1)}\right].\]
Then with a little computation we see $\int_\mathbb{R}g=2$ and $\int_\mathbb{R}f=1$.  Since
\[\lim_{n\rightarrow\infty}\int_{\mathbb{R}}g_n=\int_\mathbb{R}g\quad\text{then by Theorem \ref{GLDCT},}\quad\lim_{n\rightarrow\infty}\int_{\mathbb{R}}f_n=\int_\mathbb{R} f=1\]which proves Theorem \ref{myTheorem}.

\section{Proof of Theorem \ref{myTheorem2}}
We again start with the formula provided by Theorem \ref{LiandWei}, divide by the appropriate function of $n$ and attempt to take the limit.  This time we use the change of variables, $w=|z|^2$.  Then Theorem \ref{myTheorem2} is equivalent to
\begin{equation}\label{originalIntegral2}
\lim_{n\rightarrow\infty}\frac{1}{n\log n}\int_0^\infty\frac{1}{w}\frac{r_1^2+r_2^2-2r_{12}^2}{r_3^2\sqrt{(r_1+r_2)^2-4r_{12}^2}}dw\le1
\end{equation}
where $r_1=(a_n+a_m)c_n-b_n^2$, $r_2=(a_n+a_m)c_m-b_m^2$, $r_{12}=b_nb_m$ and $r_{3}=a_n+a_m$, again using the convention that \[a_k=\sum_{j=0}^kw^j,\quad b_k=\sum_{j=1}^kjw^j\quad\text{and}\quad c_k=\sum_{j=1}^kj^2w^j.\]  In the case where $n=m$, then $r_1=r_2=2a_nc_n-b_n^2$, $r_{12}=b_n^2$, and $r_3=2a_n$.  This allows us to simplify the integrand to
\[\frac{\sqrt{r_1^2-r_{12}^2}}{wr_3^2}=\frac{\sqrt{a_nc_n(a_nc_n-b_n^2)}}{2wa_n^2}\]

First we bound $\frac{c_n}{n^2a_n}$.  By increasing all the coefficients of $w^k$ in $c_n$ from $k^2$ to $n^2$, we can clearly see $\frac{c_n}{n^2a_n}$ is bounded above by $1$.  Furthermore,
\[\frac{c_n}{n^2}\ge\sum_{k=\lceil \frac{n}{2}\rceil}^n\frac{k^2}{n^2}w^k\ge\frac{1}{4}\sum_{k=\lceil \frac{n}{2}\rceil}^nw^k\]
and for $w\in(1,\infty)$,
\[a_n\le2\sum_{k=\lfloor \frac{n}{2}\rfloor}^nw^k\quad\implies\quad\frac{c_n}{n^2a_n}\ge\left\{\begin{array}{ll}
\frac{1}{8}&n\text{ even}\\\\\frac{1}{8}\left(1-1/a_{\frac{n+1}{2}}\right)&n\text{ odd}
\end{array}\right.\ge\frac{1}{16}.\]A better lower bound of $\frac{e-2}{e-1}$ can be found using algebraically complicated calculus but we simply require a lower bound greater than $0$.

Next we inspect another factor of the square of the integrand.  Similar to the case with fixed $m$, we can write $b_n$ as a product of $w$ and the derivative of $a_n$, $b_n=w\frac{d}{dw}[a_n]$ and similarly $c_n=w\frac{d}{dw}[b_n]$.  As before, we use the identity that $a_n=\frac{w^{n+1}-1}{w-1}$.  Then
\begin{align*}
\frac{a_nc_n-b_n^2}{wa_n^2}&=\frac{d}{dw}\left[\frac{b_n}{a_n}\right]=\frac{d}{dw}\left[w\frac{d}{dw}\left[\log a_n\right]\right]=\frac{d}{dw}\left[\frac{(n+1)w^{n+1}}{w^{n+1}-1}-\frac{w}{(w-1)}\right]\\&=\frac{d}{dw}\left[n+\frac{n+1}{w^{n+1}-1}-\frac{1}{w-1}\right]=\frac{1}{(w-1)^2}\left(1-\frac{(n+1)^2w^n}{a_n^2}\right).
\end{align*}
Here, $\frac{(n+1)^2w^n}{a_n^2}$ can be bounded below by $0$ for all $w>0$ and above for $w\in(1+\frac{1}{n},\infty)$ for $n$ greater than some fixed $N\in\mathbb{N}$.  The lower bound of zero is clear.  For the upper bound, we first note that it is less than $1$ for all $w>0$.  This can be seen by observing that $a_n/w^n=\sum_{k=0}^nw^{-k}$ and then by using the Cauchy Schwarz to see that
\[(n+1)^2=\left(\sum_{k=0}^nw^{-k/2}w^{k/2}\right)^2\le\left(\sum_{k=0}^nw^{-k}\right)\left(\sum_{k=0}^nw^k\right).\]Then we see that
\[\frac{d}{dw}\left[\frac{w^n}{a_n^2}\right]=\frac{w^{n-1}}{a_n^3}(na_n-2b_n)<0\]because
\begin{align*}
na_n-2b_n=&\sum_{k=0}^n(n-2k)w^k\\=&\sum_{k=0}^{\lfloor n/2\rfloor}(n-2k)w^k-\sum_{k=0}^{\lfloor(n-1)/2\rfloor}(n-2k)w^{n-k}\\=&\sum_{k=0}^{\lfloor(n-1)/2\rfloor}(n-2k)(w^k-w^{n-k})
\end{align*}and $w>1$.  Thus $1-\frac{(n+1)^2w^n}{a_n^2}$ is minimized at $w=1+\frac{1}{n}$.
\[1-\frac{(n+1)^2w^n}{a_n^2}\ge1-\frac{(n+1)^2(1+\frac{1}{n})^n}{n^2((1+\frac{1}{n})^{n+1}-1)^2}\]
The limit of this lower bound as $n\rightarrow\infty$ is $1-e(e-1)^{-2}\approx0.079$ and although this function seems to be decreasing as $n$ increases, we need not have a sharp lower bound and it will suffice that for large enough $n$, we have , say, half of this limit as a lower bound.  In summary, when $w\ge 1+\frac{1}{n}$
\begin{equation}
\frac{1}{16}\le\frac{c_n}{n^2a_n}\le1\qquad\text{and}\qquad \frac{1}{2}-\frac{e}{2(e-1)^2}\le1-\frac{(n+1)^2w^n}{a_n^2}\le1
\end{equation}moreover, the upper bounds apply for all $w>0$.

We will proceed by bounding the integrals over the following three intervals,
\[I=\left[0,1-\textstyle\frac{1}{n}\right),\quad J=\left[1-\textstyle\frac{1}{n},1+\frac{1}{n}\right],\quad\text{and}\quad K=\left(1+\textstyle\frac{1}{n},\infty\right).\]When $w\in I$,
\[0\le\frac{\sqrt{a_nc_n(a_nc_n-b_n^2)}}{2wa_n^2}\le\frac{1}{2\sqrt{w}(1-w)}\]
\[\implies0\le\int_I\frac{\sqrt{a_nc_n(a_nc_n-b_n^2)}}{2wa_n^2}\le\frac{1}{2}\left(1+\log\left(2+2\sqrt{1-\frac{1}{n}}-\frac{1}{n}\right)\right)\]
\[\implies0\le\lim_{n\rightarrow\infty}\frac{1}{\log n}\int_I\frac{\sqrt{a_nc_n(a_nc_n-b_n^2)}}{2wa_n^2}\le\frac{1}{2}.\]
Let $\alpha=1-e(e-1)^{-2}$.  Then in a similar way, for $w\in K$,
\[\frac{\sqrt{\alpha}}{8\sqrt{2}}\frac{1}{\sqrt{w}(w-1)}\le\frac{\sqrt{a_nc_n(a_nc_n-b_n^2)}}{2wa_n^2}\le\frac{1}{2\sqrt{w}(1-w)}\]
\[\implies0.025\approx\frac{\sqrt{\alpha}}{8\sqrt{2}}\le\lim_{n\rightarrow\infty}\frac{1}{\log n}\int_I\frac{\sqrt{a_nc_n(a_nc_n-b_n^2)}}{2wa_n^2}\le\frac{1}{2}\]
Finally, for $w\in J$,
\[\frac{a_nc_n-b_n^2}{4w^2a_n^2}\le\frac{c_n-b_n}{4w^2a_n}\le\frac{n^2-n}{4w^2}\le\frac{n^2}{4(n-1)}\]
so\[0\le\lim_{n\rightarrow\infty}\int_J\frac{\sqrt{a_nc_n-b_n^2}}{2wa_n\log n}dw\le\lim_{n\rightarrow\infty}\frac{n}{2\sqrt{n-1}\log n}\cdot\frac{2}{n}=0.\]
Thus,
\[\frac{\sqrt{\alpha}}{8\sqrt{2}}\le\lim_{n\rightarrow\infty}\frac{1}{n\log n}\int_0^\infty\frac{\sqrt{a_nc_n(a_nc_n-b_n^2)}}{2wa_n^2}dw\le1.\]

\section{Conclusion}
We have determined the asymptotic order of growth of $\mathbb{E}\mathcal{N}(\mathbb{C})$ in two cases, namely when $m$ is fixed and when $m$ grows with $n$.  The outcome in the former case, where there are on average asymptotically $n$ zeros, is not completely unexpected.  Indeed, one might expect that  as $n$ increases, $q_m(\bar{z})$ has a decreasing influence over the nature of $h_{n,m}$ when $m$ is fixed, so that $h_{n,m}$ may be heuristically treated as an analytic polynomial.  Our result gives evidence to this conclusion since on average $h_{n,m}$ has $n$ zeros.  On the other hand, when $m$ grows with $n$ we see a different behavior, implying that $h_{n,m}$ is quite distinguishable from an analytic polynomial of degree $n$.  Besides the additional $\log n$ factor in the total number of zeros, we also notice a disparity in the shape of the so-called ``first intensity'', that is, in the integrand appearing in Theorem \ref{LiandWei}.  In the case that $m$ is fixed the first intensity divided by $n$ converges to $0$ everywhere except along the unit circle.  Moreover, after the change of variable, we find that the area to the left of $t=0$ under the curve $f(t)$ is equal to the area to the right of $t=0$, where $t=0$ corresponds to $|z|=1$ through the change of variables.  This tells us that as $n$ increases, not only do the zeros concentrate on the unit circle, but they seem to do so in a symmetric manner (resembling the known symmetry of the first intensity of an analytic polynomial).  In contrast, the first intensity when $m=n$ accumulates to the unit circle in an asymmetric manner when divided by $n\log n$, which can be seen when dividing the integrand by $n$ instead of $\log n$ and taking a limit.  After the change of variables it is clear that this limit is $0$ for $w<1$ while it is $\frac{1}{2(w-1)\sqrt{w}}$ for $w>1$.

There are natural questions extending from this discussion.  Though we know the order of growth of $\mathbb{E}\mathcal{N}(\mathbb{C})$ for $n=m$ to be $n\log n$, it is yet an open problem to prove the limit of the quotient exists and, if it does, to obtain the explicit value of the constant.  One may also notice that we have only explored two cases for the behavior of $m$.  An obvious extension would be to investigate the case where $m=\alpha n$, $\alpha\in(0,1)$ as was done in ~\cite{LiWei} and ~\cite{Lundberg}.







\bibliographystyle{amsplain}

\bibliography{RandomPolyDraft}

\end{document}